\def\f{\mathcal{F}}
\def\fo{\mathfrak{F}}
\def\g{\mathcal{G}}
\def\fin{\mathcal{FIN}}
\def\vcy{\mathcal{VCY}}
\def\all{\mathcal{ALL}}
\def\evyc{\underline{\underline{E}}G}
\def\efin{\underline{E}G}
\def\nh{N_{G}[H]}
\def\gh{\mathcal{G}[H]}
\def\z{\mathbb{Z}}
\def\zz{\mathbb{Z}\rtimes \mathbb{Z}}
\def\r{\mathbb{R}}
\def\l{\ell}
\def\L{\mathcal{L}}
\def\a{\mathcal{A}}
\def\q{\mathbb{Q}}
\def\nu{N(\mathcal{U})}
\def\Iso{\rm{Iso}}
\def\s{\mathbb{S}}
\newtheorem{thm}{Theorem}[section]
\newtheorem{prop}[thm]{Proposition}
\theoremstyle{definition}
\newtheorem{defi}[thm]{Definition}
\newtheorem{rem}[thm]{Remark}
\newtheorem{lem}[thm]{Lemma}
\title{Models for classifying spaces for $\zz$}
\author{Daniel Juan-Pineda}
\address{Centro  de  Ciencias Matem\'aticas. \\
UNAM Campus  Morelia\\
Ap.Postal  61-3 Xangari\\ Morelia, Michoac\'an.  M\'EXICO 58089}
 \email{daniel@matmor.unam.mx}
\thanks{We acknolwedge support from research grants from DGAPA-UNAM and CONACyT-M\'exico}
\author{Alejandra Trujillo-Negrete}
\address{Centro  de  Ciencias Matem\'aticas. \\
UNAM Campus  Morelia\\
Ap.Postal  61-3 Xangari\\ Morelia, Michoac\'an.  M\'EXICO 58089}
 \email{aletn@matmor.unam.mx}
\begin{document}

\begin{abstract}
  We construct two models for the classifying space for the family of infinite cyclic subgroups of the fundamental group of the Klein bottle.
  These examples do not fit in general constructions previously done, for example, for hyperbolic groups.

\end{abstract}
\maketitle
\section{Introduction}

Let $G$ be a discrete group. A \emph{family}, $\f$, of subgroups of $G$ is a nonempty set of subgroups of $G$
which is closed under conjugation and taking subgroups.
A model, $E_{\f}(G)$, for the \textit{classifying space  of the fami\-ly $\f$} is
a $G$-CW-complex $X$, such that all of its isotropy groups  belong to $\f$ and if
$Y$ is a  $G$-CW-complex  with isotropy groups belonging to $\f$, there is precisely one
$G$-map $Y \rightarrow X$ up to $G$-homotopy. Classifying spaces for families appear frequently in mathematics, notably, in various Assembly Isomorphism conjectures
such as  the Baum--Connes  and the Farrell--Jones Conjectures, see \cite{luck}.

A model for $E_{\vcy } (\z\times \z)$, the classifying space for the family of virtually
cyclic subgroups of $\z \times \z$, may be constructed as countable join $*_{i \in \z}\r_i$
with each $\r_i \simeq \r$ see \cite{JL}, with a suitable action of $\z\times\z$.
This space is built by using the fact that each nontrivial virtually cyclic subgroup $H$
of $\z \times \z$ is normal in $\z\times \z$. We cannot build a model for $E_{\vcy}(\zz) $
in the same way because  this is not the case in $\zz$. In this note, we present two models
for $E_{\vcy}(\zz )$ (which are $\zz$-homotopy equivalent). Our principal results are Theorems \ref{thm-join} and \ref{second-model}, and Proposition \ref{prop-homology}.

The constructions follow the 
work of W. L\"uck and M. Weiermann in \cite{luck-weiermann}, and of D. Farley
in \cite{farley}. The latter uses the fact that $\zz$ is a $CAT(0)$ group as it acts
 by isometries on the plane, and the former follows a general construction. 
 
We thank the referee for valuable suggestions. 


\section{Classifying Spaces for Families}

Let $G$ be a discrete group. A \emph{family} $\f$ of subgroups of $G$ is a nonempty set of subgroups of $G$
which is closed under conjugation and taking subgroups. Some examples  are:
$ \{1\}$, the family consisting of the trivial subgroup in  $G$,
  $\fin$, the family of finite subgroups of $G$,
  $\vcy$ the family of virtually cyclic subgroups of $G$ and
  $\all$, the family of all subgroups of $G$.

Let $H$ be a subgroup of $G$, and $\f$ be a family of subgroups of $G$,  then $\f$ defines a family of $H$ as follows
\begin{equation*}
\f (H)=\{K\subseteq H| K\in \f  \}. \label{fsubg}
\end{equation*}

\begin{defi}Let $\f$ be a family of subgroups of $G$. A model for the
\textit{classifying space $E_{\f}(G)$ of the family $\f$} is a $G$-CW-complex $X$,
such that all of whose isotropy groups  belong to $\f$. If $Y$ is a  $G$-CW-complex
with isotropy groups belonging to $\f$, there is precisely one map $G$-map $Y \rightarrow X$ up
to $G$-homotopy. We denote by $B_{\f}(G)$ be the quotient of $X$ by the action of $G$.
\end{defi}
In other words, $X$ is a terminal object in the  category of $G$-CW complexes with isotropy groups belonging to $\f$.
In particular, two models for $E_{\f}(G)$ are $G$-homotopy equivalent, and then we denote $X$
by  $E_{\f}(G)$.

\begin{rem}\label{incfam}
Given two families $\f_1\subseteq \f_2$ of subgroups of $G$,  since $E_{\f_2} (G)$ is a terminal object in the  category of $G$-CW complexes with isotropy groups belonging to $\f_2$,     there exists precisely one $G$-map up to $G$-homotopy
$$E_{\f_1}(G) \rightarrow E_{\f_2}(G).$$
\end{rem}

\begin{defi}
Let $G$ be a group, $H\subseteq G$ and $X$  a  $G$-set.  The \emph{fixed point set}  $X^H$ is  defined as
\begin{align}
X^H =\{x\in X \mid \forall h\in H, \; hx =x  \; \}.
\end{align}
\end{defi}

\begin{thm}\cite[Thm. 1.9]{luck} \label{luck-caract}
A $G$-CW-complex $X$ is a model for $E_{\f}(G)$ if and only if the $H$-fixed point set $X^H$ is contractible for $H \in \f$ and is empty for $H \not\in \f$.
\end{thm}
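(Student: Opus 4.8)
The plan is to reduce every equivariant statement about $X$ to an ordinary statement about its fixed‑point spaces by means of the natural bijection
\[
\operatorname{map}_G(G/H, X) \;\cong\; X^H, \qquad f \longmapsto f(eH),
\]
together with the fact that an arbitrary $G$-CW-complex $Y$ is built by attaching equivariant cells $G/H \times D^n$ along $G/H \times S^{n-1}$. Since a $G$-map out of such a cell amounts to an ordinary map $D^n \to X^H$, existence and uniqueness of $G$-maps $Y \to X$ will be governed cell by cell by the homotopy groups $\pi_*(X^H)$, as in classical obstruction theory. I would first dispose of the conditions indexed by $H \notin \f$: if $x \in X^H$ then $H \subseteq G_x$, so if all isotropy groups of $X$ lie in $\f$, then $H \in \f$ by closure of $\f$ under subgroups, and contrapositively $X^H = \emptyset$ whenever $H \notin \f$; conversely the collection $\{X^H = \emptyset : H \notin \f\}$ is precisely equivalent to the requirement that every isotropy group of $X$ belong to $\f$. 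Thus this half of the statement is a direct translation of the defining isotropy condition.

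For the implication that the contractibility conditions make $X$ a model, I would verify the universal property by equivariant obstruction theory. Let $Y$ be any $G$-CW-complex with isotropy in $\f$, so that the only orbit types $G/H$ occurring in $Y$ have $H \in \f$. Building a $G$-map $Y \to X$ inductively over the equivariant skeleta, the obstruction to extending over an $n$-cell of type $G/H$ lies in $\pi_{n-1}(X^H)$; since $X^H$ is contractible, and in particular nonempty for the base case, all obstructions vanish and the map exists. For uniqueness, given two $G$-maps $f_0, f_1 : Y \to X$ I would construct a $G$-homotopy on $Y \times [0,1]$ by the same induction, where the obstruction to extending over an $n$-cell lies in $\pi_n(X^H) = 0$. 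Hence there is exactly one $G$-homotopy class of maps $Y \to X$, which is the terminal property.

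For the converse I would argue by comparison. Granting that some $G$-CW-model $E$ of $E_\f(G)$ satisfying the fixed-point conditions exists, the terminal property applied to both $X$ and $E$ yields $G$-maps $X \to E$ and $E \to X$ whose composites are $G$-homotopic to the respective identities by uniqueness; hence $X \simeq_G E$. Applying the fixed-point functor $(-)^H$, which carries $G$-homotopy equivalences to homotopy equivalences, gives $X^H \simeq E^H$ for every $H$, and the right-hand side is contractible for $H \in \f$.

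The step I expect to be the main obstacle is the rigorous bookkeeping of the equivariant obstruction theory in the reverse direction: one must check that the relevant coefficient systems really collapse to the individual groups $\pi_*(X^H)$, which is where the naturality of the bijection above and the conjugacy structure of orbit types must be handled with care, and that the induction is legitimate over possibly infinite-dimensional $Y$. A self-contained alternative that sidesteps the comparison model in the forward direction is to test terminality directly against $Y = G/H \times S^n$ for $H \in \f$: uniqueness up to $G$-homotopy of maps out of these spaces forces $\pi_n(X^H) = 0$ for all $n$, together with $X^H$ nonempty and path-connected, so that the CW-complex $X^H$ is weakly contractible and hence contractible by Whitehead's theorem.
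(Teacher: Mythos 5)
The paper offers no proof of this statement at all: it is quoted verbatim from L\"uck's survey (\cite[Thm.~1.9]{luck}), so there is nothing internal to compare against. Your argument is, in substance, the standard proof that stands behind that citation (skeletal induction using $\operatorname{map}_G(G/H,X)\cong X^H$, i.e.\ the equivariant Whitehead-type argument), and it is essentially correct. Two remarks. First, your ``comparison'' version of the converse is circular as stated: granting the existence of a model $E$ with the prescribed fixed-point behaviour is tantamount to assuming the hard direction already proved for some complex, and proving that existence requires a genuine construction (Milnor-type join or bar construction). You evidently noticed this, and your self-contained alternative is the right fix: terminality tested against $Y=G/H\times S^n$ (legitimate test objects, since the isotropy groups $gHg^{-1}$ lie in $\f$ by closure under conjugation) forces $[S^n,X^H]$ to be a singleton for all $n$, hence $X^H$ nonempty, path-connected, with vanishing homotopy groups; since the $H$-fixed points of a $G$-CW-complex inherit a CW structure, Whitehead's theorem upgrades weak contractibility to contractibility. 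Second, the worry you raise about coefficient systems and infinite-dimensional $Y$ dissolves: because every $X^H$ with $H\in\f$ is contractible, no actual obstruction theory is needed---any map $S^{n-1}\to X^H$ extends over $D^n$ outright---and the skeletal filtration of a $G$-CW-complex is indexed by $\mathbb{N}$ even when $Y$ is infinite-dimensional, so the induction and passage to the colimit are unproblematic. With the comparison paragraph deleted in favour of your alternative, the proposal is a complete and correct proof of the cited theorem.
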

A model for $E_{\mathcal{ALL}}(G)$ is $G/G$, a model for $E_{\{1\}}(G)$ is the same as a  model for $EG$, the total space of the universal $G$-principal bundle $EG \rightarrow BG$, \cite{milnor}.

We write  $\efin$ for $E_{\mathcal{FIN}(G)}$, this is known as the \textit{universal} $G$-CW-\textit{complex for proper $G$-actions}, and we write $\evyc$ for $E_{\mathcal{VCY}}(G)$.


\subsection{Constructing models from models for smaller families} \label{section-build}

Let $\f$ and $\g$ be two families of a group $G$, with $\f \subseteq \g$, such that we
know a model for $E_{\f}(G)$. In \cite{luck-weiermann} W. L\"uck and M. Weiermann build a model
$E_{\g}(G)$ from $E_{\f}(G)$, and in \cite{farley} Farley builds another model for groups
acting on CAT(0)-spaces. In this Section we present these models.

Let $\f=\fin$ and $\g=\vcy$. Following \cite{luck-weiermann}, define an equivalence relation $\sim$ on $\fo:= \vcy - \fin$  as
   \begin{equation} \label{rel-eq}
   V\sim W \:\:\Longleftrightarrow  |V \cap W |= \infty ,
   \end{equation}
for $V$ and $W$ in $\fo$, where
$|\star|$ denotes the cardinality of the set $\star$.
Let $[\fo]$ denote the set of equivalence classes under the above relation
and  let $[H] \in [\fo ]$  be the equivalence class of $H \in \fo$.

For $H\in \f$
\begin{align}\label{nh}
 \nh :=&\{ g\in G \mid [g^{-1} Hg]=[ H]  \}     \nonumber
\\ =&\{ g\in G \mid |g^{-1} H g \cap H|= \infty \}.
 \end{align}
This is  the isotropy group of $[H]$ under the $G$-action on $[\fo]$ induced by conjugation.
Note that  $\nh$ is the commensurator of $H$ in $G$. 
Define a family of subgroups $\gh$ of $\nh$ by
\begin{align} \label{gh}
  \gh:=\{K \subseteq \nh \mid K\in \fo, |K\cap H|= \infty \} \cup \{ \fin \cap \nh \}.
\end{align}

The method to build a model of $E_{\g}(G)$ from one of $E_{\f}(G)$ is with the following theorem.
\begin{thm}\cite[Thm. 2.3]{luck-weiermann}\label{thm-luck-weierm}
Let $\f \subseteq \g$ and $\sim$ as above. Let $I$ be a complete system of representatives $[H]$ of the $G$-orbits in $[\g-\f]$ under the $G$-action coming from conjugation. Choose arbitrary $N_G[H]$-CW-models for $E_{\f\cap N_G[H]}(N_G[H])$,  $E_{\g[H]}(N_G[H])$ and an arbitrary $G$-CW-model for $E_\f(G)$. Define $X$ a $G$-CW-complex by the cellular $G$-pushout
\begin{align*}
\xymatrix{
                \coprod_{[H]\in I} G \times_{N_G[H]} E_{\f\cap N_G[H]}(N_G[H])
                \ar[d]^{\coprod_{[H]\in I} id_G \times_{N_G[H]}f_{[H]}}
                \ar[r]^(0.75){i}
                &E_{\f}(G)
                \ar[d]
                \\
                \coprod_{[H]\in I} G \times_{N_G[H]} E_{\g[H]}(N_G[H])
                \ar[r]
                & X
}
\end{align*}
such that $f_{[H]}$ is a cellular $N_G[H]$-map for every $[H]\in I$ and $i$ is an inclusion of $G$-CW-complexes, or such that every map $f_{[H]}$ is an inclusion of $N_G[H]$-CW-complexes for every $[H]\in I$  and $i$ is a cellular $G$-map. 
Then $X$ is a model for $E_{\g}(G).$
\end{thm}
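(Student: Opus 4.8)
The plan is to verify the criterion of Theorem \ref{luck-caract}: I must show that the $L$-fixed point set $X^{L}$ is contractible when $L\in\g$ and empty when $L\notin\g$, for every subgroup $L\subseteq G$. Since the defining square is a cellular $G$-pushout in which one leg is an inclusion of $G$-CW-complexes, applying the fixed-point functor $(-)^{L}$ yields again a pushout of spaces: on a $G$-cofibration the functor $(-)^{L}$ restricts to a cofibration of CW-complexes, and on a cellular pushout along such a map it may be computed cell by cell, so $X^{L}$ is the pushout of $E_{\f}(G)^{L}$ and $\coprod_{[H]}\bigl(G\times_{\nh}E_{\gh}(\nh)\bigr)^{L}$ along $\coprod_{[H]}\bigl(G\times_{\nh}E_{\f\cap\nh}(\nh)\bigr)^{L}$. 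The computation then rests on the elementary identity for an induced space: for an $\nh$-CW-complex $Y$,
\begin{equation*}
\bigl(G\times_{\nh}Y\bigr)^{L}\;=\!\!\coprod_{\substack{g\nh\in G/\nh\\ g^{-1}Lg\subseteq\nh}}\!\! Y^{\,g^{-1}Lg},
\end{equation*}
together with the defining property of the chosen models, namely that $Y^{J}$ is contractible when $J$ belongs to the relevant family of $\nh$ and empty otherwise.

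With this in hand I would argue by cases on $L$. If $L\notin\g$, then $L\notin\f$ as well, so $E_{\f}(G)^{L}=\emptyset$; moreover no conjugate $g^{-1}Lg$ lies in $\gh$ or in $\f\cap\nh$, so both left-hand corners have empty $L$-fixed points and the pushout of empty spaces is empty. If $L\in\f$, then any $g^{-1}Lg$ contained in $\nh$ lies in $\f\cap\nh\subseteq\gh$; hence the two left-hand corners have $L$-fixed points that are disjoint unions of contractible spaces indexed by the \emph{same} set $\{\,g\nh : g^{-1}Lg\subseteq\nh\,\}$, and the left vertical map is a homotopy equivalence on $L$-fixed points. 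As $E_{\f}(G)^{L}$ is contractible and one leg of the square is a cofibration, the gluing lemma gives $X^{L}\simeq E_{\f}(G)^{L}\simeq *$.

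The substantive case is $L\in\g\setminus\f$. Here $E_{\f}(G)^{L}=\emptyset$, and the top-left corner is also empty because a conjugate $g^{-1}Lg\in\g\setminus\f$ can never lie in $\f\cap\nh$; consequently $X^{L}$ is homeomorphic to the $L$-fixed points of the bottom-left corner, a disjoint union of contractible spaces indexed by the pairs $([H],g\nh)$ with $[H]\in I$ and $g^{-1}Lg\in\gh$. The crux is to show that exactly one such pair occurs. Since $g^{-1}Lg\in\g\setminus\f$, one has $g^{-1}Lg\in\gh$ precisely when $[g^{-1}Lg]=[H]$, and this condition descends to cosets $g\nh$ because $\nh$ is the commensurator of $H$. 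As $I$ is a complete set of orbit representatives for the conjugation action on $[\g-\f]$, there is a unique $[H]\in I$ in the orbit of $[L]$ and a chosen $g_{0}$ with $[g_{0}^{-1}Lg_{0}]=[H]$. A direct commensurator computation then shows that the admissible $g$ form exactly the coset $N_{G}[L]g_{0}$, and that, because $N_{G}[g_{0}Hg_{0}^{-1}]=N_{G}[L]$, all of these determine the \emph{same} class $g_{0}\nh$ in $G/\nh$. Thus $X^{L}$ has a single contractible component and is contractible.

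I expect this last uniqueness step to be the main obstacle: one must check simultaneously that the fixed-point condition $g^{-1}Lg\in\gh$ is well defined on cosets $g\nh$ and that, after passing to the unique orbit representative, the remaining freedom in $g$ collapses to one coset, both of which hinge on the interpretation of $\nh$ as the commensurator. The cases $L\notin\g$ and $L\in\f$ are formal consequences of the induction formula and the gluing lemma.
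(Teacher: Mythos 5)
Your proposal is correct, but note that the paper itself gives no proof of this statement: it is quoted verbatim from L\"uck--Weiermann \cite[Thm.~2.3]{luck-weiermann}, so the only comparison available is with the original source. Measured against that, you follow essentially the same route --- verify the fixed-point criterion of Theorem \ref{luck-caract}, commute $(-)^{L}$ past the cellular $G$-pushout, use the induction formula for $(G\times_{\nh}Y)^{L}$, and in the key case $L\in\g-\f$ show that exactly one pair $([H],g\nh)$ contributes via the commensurator interpretation of $\nh$ (your uniqueness computation $N_{G}[L]g_{0}=g_{0}\nh$ is exactly the step in the original, where one should also note that $[g^{-1}Lg]=[H]$ forces $g^{-1}Lg\subseteq\nh$, since a group fixes its own commensurability class).
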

The maps in Theorem \ref{thm-luck-weierm} are given by the universal property of  classifying spaces for families and  inclusions of families of subgroups (see Remark (\ref{incfam})). \\

The following  is Definition 2.2, in \cite{farley}.
\begin{defi}\label{def-i-g-g}
Let $\f \subseteq \g $ be families of subgroups of a group $G$. We say that a $G$-CW complex $X$ is an \emph{$I_{\g -\f} G$-complex}  if
\begin{enumerate}
\item[(i)] whenever $H \in \g-\f $, $X^H$ is contractible;
\item[(ii)] whenever $H\not\in \g $, $X^H=\emptyset $.
\end{enumerate}
\end{defi}

Observe that if all isotropy groups are in $\g$ then (ii) holds. Since the trivial subgroup
is not in $\g- \f$,  $X$ is not necessarily contractible.

\begin{thm} \cite[Prop. 2.4]{farley} \label{join-g-f}
 If $G$ is a group and $\f \subseteq \g$ are families of subgroups of $G$, then the join
$$(E_{\f})*(I_{\g -\f}G)$$
is a model for the classifying space  $E_{\g}G$.
\end{thm}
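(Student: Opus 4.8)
The plan is to verify directly that $(E_{\f})*(I_{\g-\f}G)$ satisfies the fixed-point criterion of Theorem \ref{luck-caract}. Write $X = E_{\f}(G)$ and $Y = I_{\g-\f}G$, and equip the join $X*Y$ with the diagonal $G$-action; since $X$ and $Y$ are $G$-CW-complexes, so is $X*Y$, so it is a legitimate candidate. The whole argument rests on one elementary but crucial observation: taking $H$-fixed points commutes with the join, that is,
\[
(X*Y)^H = X^H * Y^H
\]
for every subgroup $H \le G$. I would prove this by writing a point of $X*Y$ as $tx+(1-t)y$ with $x\in X$, $y\in Y$, $t\in[0,1]$, and noting that, under the diagonal action, such a point is fixed by all of $H$ exactly when $x\in X^H$ and $y\in Y^H$; the degenerate strata $t=0$ and $t=1$, which recover the embedded copies of $X$ and $Y$, are consistent with this description.

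With this identity in hand, I would run a case analysis on a subgroup $H\le G$, using Theorem \ref{luck-caract} for $X=E_{\f}(G)$ (so $X^H$ is contractible for $H\in\f$ and empty otherwise) and Definition \ref{def-i-g-g} for $Y$ (so $Y^H$ is contractible for $H\in\g-\f$ and empty for $H\notin\g$). Since $\f\subseteq\g$, there are exactly three cases. If $H\notin\g$, then $H\notin\f$ as well, so $X^H=\emptyset$ and $Y^H=\emptyset$, whence $(X*Y)^H=\emptyset*\emptyset=\emptyset$, as required for $H\notin\g$. If $H\in\g-\f$, then $X^H=\emptyset$ while $Y^H$ is contractible, so $(X*Y)^H=\emptyset*Y^H=Y^H$ is contractible. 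Finally, if $H\in\f$, then $X^H$ is contractible, and since the join of a contractible space with any space is again contractible, $(X*Y)^H=X^H*Y^H$ is contractible regardless of what $Y^H$ is (which is fortunate, as neither clause of Definition \ref{def-i-g-g} controls $Y^H$ in this case). These three cases exhaust all subgroups and together show that $(X*Y)^H$ is contractible precisely when $H\in\g$ and empty otherwise. By Theorem \ref{luck-caract}, $X*Y$ is then a model for $E_{\g}(G)$.

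The main obstacle is establishing the fixed-point/join identity cleanly together with the two boundary facts about joins on which the case analysis leans, namely that the join with the empty set is the identity ($\emptyset*B=B$, so an empty fixed set in one factor does not spuriously empty the whole join) and that $A*B$ is contractible whenever $A$ is contractible. The latter follows because the join respects homotopy equivalences of CW-complexes, so $A\simeq\{\mathrm{pt}\}$ yields $A*B\simeq\{\mathrm{pt}\}*B=CB$, a cone, which is contractible. Once these structural facts about joins are in place, the case analysis is entirely routine.
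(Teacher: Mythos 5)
Your proof is correct, and it is essentially the argument the paper implicitly relies on: the paper gives no proof of this statement, citing it as Proposition 2.4 of Farley, whose proof proceeds exactly as yours does, via the identity $(X*Y)^H = X^H * Y^H$ (using uniqueness of join coordinates for $0<t<1$) followed by the three-case analysis against the fixed-point criterion of Theorem \ref{luck-caract}. Your handling of the two boundary facts ($\emptyset * B = B$, and that a join with a contractible CW-complex is contractible, covering the case $H\in\f$ where $Y^H$ is unconstrained) is exactly the right bookkeeping.
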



\section{First  model for $\underline{\underline{E}}({\z\rtimes \z})$} \label{section-modelcat}

Let $\mathcal{K}:=(\zz )\backslash \r^2$ be the klein bottle. Following \cite{farley} we construct a model for $\underline{\underline{E}}({\z\rtimes \z})$ using the fact that $\zz $ acts  by isometries  on  $\r^2$. This action is given by deck transformations of the universal covering $p\colon \r^2 \to \mathcal{K}$ of the Klein bottle as $\zz\cong \pi_1(\mathcal{K})$.

\subsection{Virtually cyclic subgroups of $\zz$}

 Let  $\zz$  be the \emph{Klein bottle  group} with multiplication
\begin{align*}
  (n_1, m_1)(n_2, m_2)=(n_1+(-1)^{m_1}n_2, m_1+m_2),
\end{align*}
 inverse element
 $$(n,m)^{-1}=((-1)^{1-m}n,-m)$$
  and the neutral element is  $(0,0)$.

\begin{rem} \label{remconj}
 For $(t_1,t_2), (n,m) \in \zz$ we have that
  \begin{align}
  (t_1,t_2)(n,m)(t_1,t_2)^{-1} &=(t_1,t_2)(n,m)((-1)^{1-t_2} t_1, -t_2)    \nonumber  \\
  &= (t_1,t_2) (n+(-1)^m (-1)^{1-t_2} t_1, \; m-t_2   ) \nonumber  \\
  &= (t_1 + (-1)^{t_2} [n + (-1)^{(m+1-t_2)} t_1] ,  \; m   )  \nonumber \\
  &=( (-1)^{t_2}n + t_1 + (-1)^{m+1}  t_1, \; m) \label{conj}
  \end{align}
\end{rem}
In $\zz$ the families $\fin$ and $\vcy$ are
\begin{align*}
  \fin &=\{1\}\\
\vcy &=\{C\subseteq \zz \mid \, C\;\text{is infinite cyclic}\}\cup\{1 \}.
\end{align*}
\begin{rem}\label{rem-clasif-subg}
We  classify all infinite cyclic subgroups of  the family $\vcy$ in $\zz$. Let $n,m\in \z$, $k \in \mathbb{N}$. Observe that
    \begin{align*}
   &(n,m)^k= ([1+(-1)^m +(-1)^{2m}+ \cdots (-1)^{(k-1)m }] n, \: km) ;\\
   &(n,m)^{-k}=([(-1)^{1-m} + (-1)^{1-2m}+ \cdots + (-1)^{1-km}  ] n, \; -km).
    \end{align*}

Therefore infinite cyclic subgroups in $\vcy$  are of the following form
     \begin{align}
     & \langle(n,2m')\rangle =\{(kn,2km') \mid k\in \z\}, \;\;   \text{where} \;\; (n,2m')\neq (0,0) ;  \label{power-e}  \\
      & \langle(n,2m'+1)\rangle = \{ (n,2m'+1)^k \mid  k\in \z \}, \;\;\; \text{where}      \label{power-o-e} \\
             &\;\;\;\;\;  (n,2m'+1)^k =\left \{ \begin{array}{cc} (0,k(2m'+1)), & \;\; \text{if $k$ is  even }
                                                            \\ (n,k(2m'+1)) & \;\; \text{if $k$ is odd}.
                                                           \end{array} \right.  \nonumber
          \end{align}
\end{rem}

\subsection{A model for $\underline{\underline{E}}({\z\rtimes \z})$}

The group $\zz$ acts on $\r^2$  by deck transformations of the universal covering
$p\colon \r^2 \to \mathcal{K}$ of the Klein bottle. Explicitly, the action is as follows: let $(n,m)\in \zz$ and  $(t,r)\in \r^2$,  then
   \begin{align}\label{action-r2}
   (n,m)(t,r)=(n+(-1)^m t, m+r).
      \end{align}
  \begin{rem} \label{r2-free}
  A model for $E(\zz)$ is $\r^2$, because  $\r^2$ is contractible and the action (\ref{action-r2}) is free and properly discontinuous.
  \end{rem}

Let $\l(a,b)$ denote   the geodesic line in $\r^2$ determined by $a,b \in \r$ as
   \begin{align}\l(a,b):=\{(x, ax +b) \mid x \in \r \}, 
   \end{align}
 and  let
   \begin{align}\l(\infty, b):=\{ (b, y) \mid y \in \r \},     \end{align}
 denote the geodesic line parallel to $y$-axis, determined   by $b$.   Let  $\L$ be the \emph{ space of lines}  in $\r^2$:
    \begin{align}
    \L:=\{\l(a,b) \mid a \in \r\cup\{\infty \}, \;\; b \in \r \}.
    \end{align}
The space of lines is a metric space with the following distance
\begin{align*}
  d(\l_1,\l_2)= \left \{
                \begin{array}{cc}
                \frac{k}{1+k}  &  \text{if $\l_1$ and $\l_2$ bound a flat strip of width }k; \\
                1 & \text{if $\l_1$ and $\l_2$ are not parallel.}
                \end{array}
                \right.
\end{align*}
Then  we have that    $\L= \coprod_{a\in \r \cup \{\infty \}}   \r_a  $, where
$\r_a:=\{\l(a,b)\mid b\in \r\},$ and the metric in each connected component $\r_a$ is given  by $d$.

Since the action of $\zz$ in $\r^2$ sends geodesic lines to geodesic lines, it induces an action of $\zz$ on $\L$.  For $(n,m)\in \zz$ and $\l(a,b)\in \L$,  this action  is as follows,
   \begin{align}
   &(n,m)\l(a,b)=\l((-1)^m a, b+m-(-1)^m an ), \;   \text{ if } a \in \r  ; \label{action-lines} \\
   &(n,m)\l(\infty,b)= \l(\infty, n+ (-1)^m b ).  \label{action-lines2}
   \end{align}

\begin{defi}
 Let $(n,m)\neq (0,0)$ in $\zz$. We say that a line $\l \subset \r^2$ is an \emph{axis} for
 $(n,m)$   if
   $(n,m)\l =\l $ and  $(n,m) $  acts by translation on $\l $.
 \emph{The axis space}, for elements of $\zz$ in $\r^2$, is defined as follows
     \begin{align*}
        \a& :=\{\l \in \L  \mid \l \;\: \text{is an axis for some }\; (n,m)\in \zz-(0,0) \} .
       \end{align*}
\end{defi}
By (\ref{action-lines2}) all the lines $\l(\infty,b)$ are axes. And by  (\ref{action-lines})   we have
 \begin{align}
(n,m) \l(a,b) =\l(a,b)  \; \;\;& \text{iff } \;\;   \l((-1)^m a, b+m-(-1)^m an )= \l(a,b) ;       
\nonumber \\
   & \text{iff }  \; \;  m \; \text{even   and } \;  a=\tfrac{m}{n} . \label{arterisco}
 \end{align}
Observe that if $(n,m)$ fixes  $\l(a,b)$, then it acts on $\l(a,b)$  by translation.
Therefore  $\l(a,b) \in \a $ if only if $a \in \q\cup \infty $, and then
\begin{align}
        \a   &= \{ \l(a,b) \in \L \mid \;   a\in \q \cup \infty , \; b \in \r  \}        \nonumber \\
         &= \coprod_{a\in \q \cup \{\infty \}}   \r_a.   \label{axis-coprod}
     \end{align}


\begin{prop}\label{axis}
The axis space $\a$ of $\r^2$ is an $I_{\vcy-\{1\}}(\zz)$-complex.
\end{prop}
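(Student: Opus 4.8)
The plan is to verify the two conditions in Definition \ref{def-i-g-g} for $\f=\fin=\{1\}$ and $\g=\vcy$: namely that $\a^{H}$ is contractible whenever $H\in\vcy-\{1\}$ is a nontrivial (infinite cyclic) subgroup, and that $\a^{H}=\emptyset$ whenever $H\notin\vcy$. By (\ref{axis-coprod}) the space $\a=\coprod_{a\in\q\cup\{\infty\}}\r_{a}$ is a disjoint union of lines, and the $\zz$-action described in (\ref{action-lines}) and (\ref{action-lines2}) permutes these components (through $a\mapsto(-1)^{m}a$ on the index set) and acts by affine isometries on the $b$-coordinate of each; choosing a $\zz$-invariant $0$-skeleton on each line endows $\a$ with the structure of a $\zz$-CW-complex. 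Since a point of $\a$ is a line $\l\subset\r^{2}$, and $g$ fixes this point of $\a$ exactly when $g\l=\l$ as a subset of $\r^{2}$, it suffices to analyze, for each subgroup $H$, the lines that are setwise stabilized by all of $H$.

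For condition (ii) I argue by contraposition: suppose $\a^{H}\neq\emptyset$ and pick $\l\in\a^{H}$, so that $H$ stabilizes the geodesic $\l\subset\r^{2}$. By Remark \ref{r2-free} the $\zz$-action on $\r^{2}$ is free and properly discontinuous, hence restriction gives an injection of the stabilizer of $\l$ into the isometry group of $\l\cong\r$; freeness forbids nontrivial elements from having fixed points on $\l$, so every nonidentity element of the stabilizer acts as a nontrivial translation of $\r$. A group of translations of $\r$ acting properly discontinuously is discrete, hence infinite cyclic or trivial, so the stabilizer of $\l$ lies in $\vcy$. Since $H$ is contained in this stabilizer and $\vcy$ is closed under subgroups, $H\in\vcy$. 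This is exactly the contrapositive of condition (ii).

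For condition (i), let $H=\langle(n,m)\rangle$ with $(n,m)\neq(0,0)$; then $\a^{H}$ is the set of lines fixed by the generator $(n,m)$, which I read off from the linear part of its action on $\r^{2}$. If $m$ is even, $(n,m)$ acts by the pure translation $(t,r)\mapsto(t+n,r+m)$, whose setwise invariant lines are precisely those parallel to the vector $(n,m)$; these form a single component $\r_{m/n}$ of $\a$ (with $\r_{\infty}$ when $n=0$), so $\a^{H}\cong\r$ is contractible, consistent with (\ref{arterisco}). If $m$ is odd, $(n,m)$ acts as a glide reflection whose linear part is the reflection $(t,r)\mapsto(-t,r)$; computing with (\ref{action-lines}) and (\ref{action-lines2}) shows it fixes no line of finite slope and fixes the single vertical line $\l(\infty,n/2)$, so $\a^{H}$ is one point, again contractible. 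The classification (\ref{power-e})--(\ref{power-o-e}) guarantees these two cases exhaust all nontrivial infinite cyclic subgroups.

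I expect the main obstacle to be condition (ii): verifying that $\a^{H}$ is empty for every non-virtually-cyclic $H$ simultaneously is where a structural idea, rather than a computation, is needed, and the key point is that freeness and proper discontinuity of the $\r^{2}$-action force every line-stabilizer to be infinite cyclic or trivial. By comparison, condition (i) is a direct calculation; the only subtlety there is not to overlook that the odd case collapses the fixed set from a whole line to a single point (the unique axis of the glide reflection), which is nonetheless contractible.
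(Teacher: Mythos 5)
Your proof is correct, and it splits into a half that matches the paper and a half that genuinely departs from it. For condition (i) of Definition \ref{def-i-g-g} you do essentially what the paper's Lemma \ref{fixed} does: the case $m$ even gives the full component $\r_{m/n}$ (or $\r_{\infty}$ when $n=0$) as fixed set, and the case $m$ odd collapses the fixed set to the single point $\l(\infty,\tfrac{n}{2})$; your computations agree with (\ref{arterisco}) and with cases (i)--(iii) of that lemma. For condition (ii), however, the paper is computational: Lemma \ref{isotropy} determines $\Iso(a,b)$ explicitly for every line (for instance $\langle(a_2,a_1)\rangle$ or $\langle(2a_2,2a_1)\rangle$ when $a=\tfrac{a_1}{a_2}$ in lowest terms, $\langle(1,0)\rangle$ when $a=0$, and $\langle(2b,1)\rangle$ or $\langle(0,2)\rangle$ when $a=\infty$), verifies each lies in $\vcy-\{1\}$, and then invokes the observation after Definition \ref{def-i-g-g} that (ii) holds once all isotropy groups lie in $\g$. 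You instead argue structurally: freeness of the action on $\r^2$ embeds any line stabilizer into the fixed-point-free isometries of $\l\cong\r$, i.e.\ translations, and proper discontinuity makes this translation group discrete, hence trivial or infinite cyclic, so no $H\notin\vcy$ can fix a line since $\vcy$ is closed under subgroups. Your route is more general (it shows line stabilizers lie in $\vcy$ for any free, properly discontinuous isometric action on the plane, with no case-by-case arithmetic), whereas the paper's computation buys explicit generators of every stabilizer, data of the same kind that Section 4 exploits for the second model. A small point in your favor: you address the $\zz$-CW structure on $\a$, which Definition \ref{def-i-g-g} formally requires and the paper passes over silently; to make your sketch airtight on $\r_{\infty}$, note that elements $(n,m)$ with $m$ odd act there as the reflections $b\mapsto n-b$, so the invariant $0$-skeleton must contain the centers $\tfrac{n}{2}$, $n\in\z$ --- taking $\tfrac{1}{2}\z$ as $0$-skeleton does the job.
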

The proof of Proposition \ref{axis} is given  in the next Subsection.
   \begin{thm}\label{thm-join}
  A model for the  classifying space for the family of virtually cyclic subgroups of $\zz$ is  the join
  \begin{align} \label{e-vcy-klein}
  \underline{\underline{E}}(\zz) = \r^2 * \coprod_{a\in \q \cup \{\infty \}} \r.
     \end{align}
 Therefore, the quotient by the action is
  \begin{align}\label{b-vcy-klein}
  \underline{\underline{B}}(\zz) = \mathcal{K} * \coprod_{a \in \q_{\geq 0} \cup \{\infty\} }\s^1.
  \end{align}
\end{thm}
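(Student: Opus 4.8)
The plan is to obtain the first equation as a direct assembly of the cited results, and then to compute the quotient factor by factor. For the first equation I would set $\f=\fin$ and $\g=\vcy$; since $\zz$ is torsion free one has $\fin=\{1\}$, so $\vcy-\fin=\vcy-\{1\}$. By Remark \ref{r2-free} the plane $\r^2$ is a model for $E_{\{1\}}(\zz)$, and by Proposition \ref{axis} the axis space $\a$ is an $I_{\vcy-\{1\}}(\zz)$-complex. Theorem \ref{join-g-f} then gives that the join $\r^2*\a$ is a model for $\underline{\underline{E}}(\zz)$. Finally, the decomposition $\a=\coprod_{a\in\q\cup\{\infty\}}\r_a$ from (\ref{axis-coprod}), together with $\r_a\cong\r$, identifies this join with $\r^2*\coprod_{a\in\q\cup\{\infty\}}\r$, which is the first assertion.

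For the second equation I would pass to the quotient by the diagonal $\zz$-action and first record the quotients of the two factors. The quotient $\r^2/\zz$ is by definition the Klein bottle $\mathcal{K}$. For $\a$, the formulas (\ref{action-lines}) and (\ref{action-lines2}) show that $\zz$ acts on the index set $\q\cup\{\infty\}$ of components by $a\mapsto(-1)^m a$ and $\infty\mapsto\infty$; the orbits are $\{0\}$, the pairs $\{a,-a\}$ for $a\neq 0$, and $\{\infty\}$, so the orbit space of the index set is $\q_{\geq 0}\cup\{\infty\}$. For a component $\r_a$ with $a\in\q$, formula (\ref{action-lines}) shows that its setwise stabilizer acts on the parameter $b$ by translations $b\mapsto b+m-(-1)^m a n$, and one checks that the image is a discrete (infinite cyclic) group of translations, so the quotient of each such component is a circle $\s^1$. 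Assembling over the orbit representatives should give $\a/\zz\cong\coprod_{a\in\q_{\geq 0}\cup\{\infty\}}\s^1$.

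The last step, and the one I expect to be the main obstacle, is to identify $(\r^2*\a)/\zz$ with the join of the quotients $\mathcal{K}*\coprod_{a\in\q_{\geq 0}\cup\{\infty\}}\s^1$. This is not a formal consequence of the factorwise computation, since in general the quotient of a join by a diagonal action need not be the join of the quotients. I would therefore fix compatible $\zz$-CW structures on $\r^2$ and on $\a$, observe that the diagonal action preserves the join coordinate $t\in[0,1]$ and permutes the product cells accordingly, and then read off a CW structure on the quotient cell by cell, checking that its gluing data realize exactly the asserted join. Particular care is needed for the component $a=\infty$, where (\ref{action-lines2}) shows the action on the parameter involves the reflection $b\mapsto -b$ in addition to translations, so that component must be analysed separately from the finite-slope ones. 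Matching the combinatorics of the quotient to the join structure is where the genuine work lies.
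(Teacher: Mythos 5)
For the first equation your argument is exactly the paper's proof --- Remark \ref{r2-free}, Proposition \ref{axis} and Theorem \ref{join-g-f}, followed by the decomposition (\ref{axis-coprod}) --- and that half is complete. The gap is in the second equation, where you have correctly located both difficulties but closed neither, and where one of your intermediate claims is false as stated. On the component $\r_{\infty}$, formula (\ref{action-lines2}) gives $b\mapsto n+(-1)^m b$, so the image of $\zz$ in the isometries of $\r_{\infty}$ is infinite dihedral, generated by unit translations and reflections; the quotient of this component is the interval $[0,\tfrac{1}{2}]$, not $\s^1$. This contradicts your assertion that assembling the components ``should give'' $\a/\zz\cong\coprod_{a\in\q_{\geq0}\cup\{\infty\}}\s^1$ --- your own remark about the reflection already rules that out. (To be fair, the paper's one-sentence proof has the same problem: it justifies (\ref{b-vcy-klein}) by saying the action of $\zz$ on $\a$ is by translation, which fails on $\r_{\infty}$; so your observation is an objection to the sharp form of the statement, not merely a loose end in your write-up.)

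Your second worry --- that the quotient of a join by a diagonal action need not be the join of the quotients --- is also genuine, and the cell-by-cell matching you propose cannot succeed as described. The open stratum of $(\r^2\ast\a)/\zz$ is $(\r^2\times_{\zz}\a)\times(0,1)$, a flat $\a$-bundle over $\mathcal{K}$; over the orbit of a component $\r_a$ with $a\in\q_{>0}$ it equals $\r^2\times_{G_a}\r_a$, where $G_a=\{(n,m)\mid m \text{ even}\}\cong\z\times\z$ acts freely and diagonally on the contractible space $\r^2\times\r_a$, so this piece is aspherical with fundamental group $\z^2$. The corresponding open stratum of $\mathcal{K}\ast\bigl(\coprod_{a\in\q_{\geq0}\cup\{\infty\}}\s^1\bigr)$ is $\mathcal{K}\times\s^1\times(0,1)$, aspherical with nonabelian fundamental group $(\zz)\times\z$. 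Since $\z^2\not\cong(\zz)\times\z$, no cell structure can identify the two stratum by stratum; equivalently, the canonical surjection $(\r^2\ast\a)/\zz\to\mathcal{K}\ast(\a/\zz)$ is not injective, because for $g\notin\mathrm{Iso}(\l)$ the pairs $(x,\l)$ and $(gx,\l)$ have the same image but lie in distinct $\zz$-orbits (freeness of the action on $\r^2$ forces the only candidate identifying element to be $g$ itself). What the quotient canonically is, is the double mapping cylinder of $\mathcal{K}\leftarrow\r^2\times_{\zz}\a\rightarrow\a/\zz$; any proof of (\ref{b-vcy-klein}), or of a corrected version of it, must compare this with the join by a further argument, which neither your proposal nor the paper supplies. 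As it stands, the second assertion remains unestablished in your write-up.
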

\begin{proof}
By Proposition \ref{axis}, the axis space of $\r^2$, is an  $I_{\vcy-\{1\}}(\zz)$-complex.
    Thus, by Theorem \ref{join-g-f} and because
 $\r^2$ is a model for $E(\zz)$, we have (\ref{e-vcy-klein}).  On the other hand,
 (\ref{b-vcy-klein}) easily follows by looking the  action of $\zz$ on $\mathcal{K}$ and $\a$,
 since the action of $\zz$ on $\a$ is by translation.
\end{proof}


\subsection{Proof of Proposition \ref{axis}}

 In this Section we will prove that the axis space $\a$ is an
 $I_{\vcy-\{1\}}(\zz)$-complex, which follow from Lemmas \ref{isotropy} and \ref{fixed}, below.

 We denote by $\Iso(a,b)$ the isotropy subgroup of $\l(a,b)\in \a$, where  $a\in \q \cup \{\infty\}$, and  $b\in \r$, that is,
      \begin{align*}
         \Iso(a,b):=\{ (n,m)\in \zz \mid  (n,m)\l(a,b)=\l(a,b)\}. 
    \end{align*}       
\begin{lem} \label{isotropy}
All isotropy subgroups  of the action of $\zz$ in the axis space $\a$ are in the set $\vcy-\{1\}$.
\end{lem}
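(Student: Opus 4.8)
The plan is to compute the isotropy subgroup $\Iso(a,b)$ explicitly from the action formulas and to exhibit a single generator in each case, thereby showing that $\Iso(a,b)$ is always a nontrivial infinite cyclic subgroup, hence an element of $\vcy-\{1\}$. I would split the argument according to whether the slope $a$ is a finite rational or $a=\infty$, since lines of these two types transform by the distinct rules (\ref{action-lines}) and (\ref{action-lines2}).

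First, suppose $a=p/q\in\q$ with $\gcd(p,q)=1$ and $q>0$. By the characterization (\ref{arterisco}), an element $(n,m)\in\Iso(a,b)$ must have $m$ even and satisfy $an=m$, i.e. $pn=qm$. Since $\gcd(p,q)=1$ this forces $n=qt$ and $m=pt$ for some $t\in\z$, subject to the parity constraint that $pt$ be even. If $p$ is even the constraint is vacuous, and using the power formula of Remark \ref{rem-clasif-subg} (which gives $(q,p)^t=(qt,pt)$ when $p$ is even) I would conclude $\Iso(a,b)=\langle(q,p)\rangle$. If $p$ is odd only even $t$ survive, and the same computation identifies the stabilizer with $\langle(2q,2p)\rangle$. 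In either case $\Iso(a,b)$ is infinite cyclic and nontrivial.

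Next, for $a=\infty$ I would use (\ref{action-lines2}): the element $(n,m)$ fixes $\l(\infty,b)$ precisely when $n=(1-(-1)^m)b$. For $m$ even this forces $n=0$, so $(0,2)\in\Iso(\infty,b)$ always and the stabilizer is never trivial. For $m$ odd it requires $n=2b$, which has an integer solution only when $2b\in\z$. Thus when $2b\notin\z$ the stabilizer is exactly $\langle(0,2)\rangle$, while when $2b\in\z$ the element $(2b,1)$ also lies in $\Iso(\infty,b)$; since the odd-exponent branch of Remark \ref{rem-clasif-subg} gives $(2b,1)^2=(0,2)$, I obtain $\Iso(\infty,b)=\langle(2b,1)\rangle$. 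In all cases the stabilizer is nontrivial infinite cyclic, which is the assertion of Lemma \ref{isotropy}.

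The routine part is the arithmetic with the twisted multiplication $(n_1,m_1)(n_2,m_2)=(n_1+(-1)^{m_1}n_2,\,m_1+m_2)$ together with the bookkeeping of the parity conditions. The one genuinely delicate point, and the step I expect to be the main obstacle, is the case $a=\infty$ with $b$ a half-integer: the two conditions $n=0$ (for even $m$) and $n=2b$ (for odd $m$) naively suggest two independent generators, and it is precisely the relation $(2b,1)^2=(0,2)$ that collapses them into a single cyclic group. Verifying this identity carefully via the odd-exponent branch of Remark \ref{rem-clasif-subg} is the point I would treat most scrupulously.
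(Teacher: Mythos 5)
Your proposal is correct and follows essentially the same route as the paper's proof: a case analysis on the slope via (\ref{arterisco}) for $a\in\q$ and (\ref{action-lines2}) for $a=\infty$, exhibiting an explicit infinite cyclic generator in each case, including the collapse $\langle(0,2)\rangle\cup\{(2b,2m'+1)\}=\langle(2b,1)\rangle$ when $2b\in\z$. The only cosmetic difference is that you absorb $a=0$ into the general rational case $a=p/q$ (recovering $\Iso(0,b)=\langle(1,0)\rangle$), where the paper treats it as a separate case.
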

\begin{proof}
 We  compute the isotropy subgroups of the action of $\zz$ on $\a$. See Remark \ref{rem-clasif-subg} about the family $\vcy$ of $\zz$.
\begin{itemize}
\item[(i)] If $a \in \q- \{0\}$ and $b \in \r$, by (\ref{arterisco}), we have
    \begin{align*}
       (n,m) \in \Iso(\l(a,b))                                      &  \; \text{iff}  \;  m \;\text{is even  and } \; a= \tfrac{m}{n}.
      \end{align*}
Suppose $a=\tfrac{a_1}{a_2}$ with $gcd(a_1,a_2)=1$,  then:
  \begin{enumerate}
  \item[1.] If $a_1$ is even then
     $$ \Iso(a,b)=\langle (a_2,a_1) \rangle \in \vcy-\{1\}.$$
  \item[2.] If $a_1$ is odd then
     $$\Iso(a,b)=\langle (2a_2,2a_1) \rangle \in  \vcy-\{1\}.$$
  \end{enumerate}
\item[(ii)] Now if $a=0$, by (\ref{action-lines}) we have
          $$ \Iso(0,b)=\langle (1,0)  \rangle \in \vcy-\{1\}.$$
\item[(iii)] Lastly,  if $a=\infty$, by (\ref{action-lines2}) we have
\begin{align*}
 (n,m)\in \zz \in \Iso(\infty,b) \; & \text{ iff } 
    n=(1-(-1)^m) b.
\end{align*}
If $m$ is even, then $n=0$, and if $m $ is odd, $n=2b $, but $b \in \r$ and $n \in \z$,  then we conclude that
\begin{enumerate}
\item[1)] if $2b \in \z$  then by (\ref{power-o-e})
\begin{align*}
\Iso(\infty, b)&=\langle (0,2) \rangle \cup \{(2b,2m'+1) \mid m' \in \z\} \\
                     &=\langle (2b,1) \rangle \in  \vcy-\{1\};
\end{align*}
\item[2)] if $b\in \r$ and $2b \not\in \z$  then by (\ref{power-e}) 
\begin{align*}
\Iso(\infty,b) = \langle (0,2) \rangle \in \vcy-\{1\}.
\end{align*}
\end{enumerate}
\end{itemize}
\end{proof}

\begin{lem}\label{fixed}
If $H\in \vcy - \{1\}$, the fixed point set $\a^H$ is contractible.
\end{lem}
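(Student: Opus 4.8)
The plan is to reduce the computation of $\a^H$ to that of a single group element and then read off the fixed lines from the explicit formulas (\ref{action-lines}), (\ref{action-lines2}) and (\ref{arterisco}). Since $H \in \vcy - \{1\}$ is infinite cyclic, write $H = \langle g \rangle$ with $g = (n,m)$. Because a line fixed by $g$ is automatically fixed by every power $g^k$, one has $\a^{g} \subseteq \a^{g^k}$ for all $k$, so $\a^{H} = \bigcap_{k \in \z} \a^{g^k} = \a^{g}$; thus it suffices to determine the set of lines $\l \in \a$ with $g\l = \l$. Recall from (\ref{axis-coprod}) that $\a = \coprod_{a \in \q \cup \{\infty\}} \r_a$ is a disjoint union of copies of $\r$ indexed by the slope, with distinct slopes lying in distinct connected components. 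Hence $\a^{g}$ is a disjoint union of subsets of the $\r_a$, and to get contractibility I must show that the fixed lines all share a single slope and form a contractible subset of that one component $\r_a \cong \r$.

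Next I would carry out the case analysis on the parity of $m$ and on whether $n = 0$, using (\ref{arterisco}) for finite slopes and (\ref{action-lines2}) for the slope $\infty$. If $m$ is even and $n \neq 0$, equation (\ref{arterisco}) shows $g$ fixes $\l(a,b)$ exactly when $a = m/n$, and then fixes it for every $b$; moreover (\ref{action-lines2}) forces $n = 0$ for any fixed line of slope $\infty$, so none occur, and $\a^{g} = \r_{m/n} \cong \r$. If $m$ is even and $n = 0$, then $g$ fixes no finite-slope line (this would require $m = 0$), while (\ref{action-lines2}) shows it fixes every $\l(\infty, b)$, giving $\a^{g} = \r_{\infty} \cong \r$. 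Finally, if $m$ is odd, (\ref{arterisco}) shows no finite-slope line is fixed, and (\ref{action-lines2}) gives $\l(\infty, n + (-1)^m b) = \l(\infty, b)$ only for $b = n/2$, so $\a^{g}$ is the single line $\l(\infty, n/2)$, a point. As a consistency check, the even subcases recover the generators of the finite-slope isotropy groups found in Lemma \ref{isotropy}.

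In every case $\a^{g}$ lies in a single component $\r_a$ and equals either all of $\r_a \cong \r$ or a single point, hence is contractible, which completes the proof. The only real point requiring care is the disjoint-union structure of $\a$: because different slopes sit in different path components, I must verify in each case that $g$ cannot simultaneously fix lines of two distinct slopes, since otherwise $\a^{H}$ would be disconnected and the contractibility claim would fail; this is exactly what the parity and $n=0$ bookkeeping above rules out. The remaining verifications are direct substitutions into (\ref{action-lines})--(\ref{action-lines2}).
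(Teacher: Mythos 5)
Your proof is correct and follows essentially the same route as the paper: the same three-way case analysis ($m$ even with $n\neq 0$, $m$ even with $n=0$, $m$ odd) via (\ref{action-lines}), (\ref{action-lines2}) and (\ref{arterisco}), arriving at the same answers $\r_{m/n}$, $\r_\infty$, and the single point $\l(\infty,\tfrac{n}{2})$. Your preliminary reduction $\a^H=\a^{g}$ for $H=\langle g\rangle$ and your explicit remark that the fixed lines must all lie in one component of $\a=\coprod_a \r_a$ are small tidy additions, but the paper's element-by-element verification accomplishes the same thing.
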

\begin{proof}
Let $H=\langle (n,m)\rangle \in \vcy-\{1\}$,  a infinite cyclic subgroup of $\zz$  (see (\ref{power-e}) and (\ref{power-o-e})),   we will describe
    $$\a^H= \{\l \in \a \mid  H \l =\l   \},$$
along the following lines.
\begin{itemize}
\item[(i)] Suppose $m$ is even, and $n \neq 0$,  then by (\ref{power-e}),  $$H=\{(kn,km) \mid k \in \z \}.$$
Observe by (\ref{action-lines2}) that 
$$(kn,km)\l(\infty,b)=\l(\infty, kn+b).$$  Since $n \neq 0$, then  $\l(\infty, b) \not \in \a^H$ for every $b \in \r$. \\
 Now if $a \neq \infty$ and $b\in \r$, by (\ref{arterisco}),  we have
    \begin{align*}
    \l(a,b) \in \a^H \;\; & \text{iff }  a=\frac{km}{kn}=\frac{m}{n}.  
     \end{align*}
    Therefore
          $$ \a ^H = \r_{\frac{m}{n}},$$
  which is contractible.
\item[(ii)] Let $m\neq 0$ even  and $K=\langle (0,m)\rangle =\{(0,km ) \mid k\in \z \}$.  By (\ref{action-lines})
$$(0,km )\l(a,b)=  \l(a, b + 2km ),$$
we have that $\l(a,b) \not \in \a^K$  whenever  $a \in \q$.  \\
On the other side, by (\ref{action-lines2}),
$$(0,km)\l(\infty,b)= \l(\infty, b), \;\:\forall k \in \z,\text{ and }\forall b \in \r .$$
Therefore,
    $$\a^K= \r_{\infty},$$
which is contractible.
\item[(iii)] If $m$  is odd and $R=\langle (n,m)\rangle  $,  recall  from (\ref{power-o-e}) that 
                    $$(n,m)^k= \left \{ \begin{array}{cc} (0,km)  & \;\;\;\text{if $k $ is even} \\ (n,km)& \;\;\; \text{if $k$ is odd} .                   \end{array} \right.$$
Let $a \in \q $ and $b \in \r$. Since $m$ is odd, by (\ref{arterisco}), no $(n,m) \in R$ fixes $\l(a,b)$.  
\\
 Now, if $a=\infty$ and $b \in \r$, by (\ref{action-lines2}) we have 
 \begin{align*}
 \l(\infty,b) \in \a^R 
      & \;\;\text{iff} \;\; b=\tfrac{n}{2}.
  \end{align*}
  Therefore
      $$  \a^R =\{\l(\infty,\tfrac{n}{2})\},$$
  the space with a single point in $\r_{\infty}$.
\end{itemize}
\end{proof}



\section{Building a second model for $\underline{\underline{E}}(\zz)$} \label{chapter-build}

 In this Section we build a  model for $\underline{\underline{E}}(\zz)$ following   \cite[Sec. 2.1]{luck-weiermann}.  We recall
  that subgroups in $\vcy$ are of the form  (\ref{power-e}) or  (\ref{power-o-e}), and $\fin=\{1\}$.
 In $\fo=\vcy - \{1\} $, we  have the following equivalences:

(i) Define $H:=\langle(1,0)\rangle$. Let      $n\in \z- \{0\}$. By  (\ref{power-e}),   $$\langle(n,0)\rangle \subseteq  \langle(1,0)\rangle ,$$
   hence by (\ref{rel-eq})
       \begin{align*}
        \langle(n,0)\rangle \sim  \langle(1,0)\rangle.
        \end{align*}
   In fact these subgroups of $\fo$  are the unique subgroups that intersect $H$ in an infinite set. Denote this class by
       \begin{align} \label{h}
       [H]=[ \langle(1,0)\rangle]=[\langle(n,0)\rangle],
       \;\;\;\: \forall n \in \z- \{0\}.
       \end{align}

(ii) Fix  $n,m \in \z- \{0\}$  and
    \begin{align} \label{r}
    R=  \langle(n,2m)\rangle.
    \end{align}
Then   there is a maximal subgroup in $\fo$ which contains $R$.\\
Let  $s:=gcd(m,n)$, we define a subgroup $R'$ as follows,
\begin{align*}
R'=\langle (\frac{n}{s}, \frac{2m}{s})\rangle.
\end{align*}
By (\ref{power-e}), it is easy to see that $R'$ is the maximal subgroup in $\fo$ which contains $R$, then by (\ref{rel-eq}),
        \begin{align*}
        R  \sim \; R'.
         \end{align*}
  Observe by  (\ref{power-e}) that the only subgroups of $\fo$ related to $R'$ are the subgroups of $\fo$  which are contained in  $R'$.
  Denote by $[R]$ this class in $[\fo].$

(iii)   We have the following inclusions by  (\ref{power-e}) and  (\ref{power-o-e})
     \begin{align*}
       \langle (0,2k) \rangle& \subseteq \langle ( 0,2)\rangle,   \;\;\;\; \text{for every }\; k \in \z- \{0\};
      \\
        \langle ( 0,2(2s+1))\rangle & \subseteq \langle (r,2s+1 )\rangle,  \;\;\;\;  \text{for every } \;r ,s\in \z ;\text{  and}
      \\
        \langle ( t,2u+1)\rangle &\subseteq \langle (t,1 )\rangle \;\;\;\;  \text{for every  }\; t,u \in \z.
     \end{align*}
 By the previous inclusions and (\ref{rel-eq}) we have the following relations 
       \begin{align*}
       & \langle(0,2(2m+1))\rangle \sim   \langle(0,2)\rangle  \sim   \langle(r,1)\rangle  \text{,  and }\\
        &\langle(n,2m+1)\rangle \sim   \langle(n,1)\rangle  \sim  \langle(0,2)\rangle \sim  \langle(r,1)\rangle    \sim  \langle(r,2s+1)\rangle
         \end{align*}
 for every $n,m,r,s \in \z$. 
  Denote this class by
  \begin{align}\label{k}
    [K]=[\langle (n,2m+1)\rangle].
    \end{align}\\

We have in $[\fo]$ the classes $[H]$, $[K]$ and infinitely many countable  classes of type  $[R]$, as many as maximal subgroups in $\fo$ of the form $\langle (n,2m)\rangle$ there are, with $n,m\neq 0$ relatively prime.

 Also  $\zz$ acts on $[\fo]$ by conjugation. The classes $[H]$ and $[K]$ are fixed by conjugation and the classes of type $[R]$ are permuted.


\subsection{Explicit models } \label{models}

We  describe models for  $E(\zz)$, $E_{\g[\star]}(N_{\zz} [\star])$ and  $E(N_{\zz}[\star])$,  with $N_G[\star] $, $\mathcal{G}[\star]$  defined in (\ref{nh}),(\ref{gh})  and  $[\star] \in [\fo]$.
Let  $g=(t_1,t_2)\in \zz$.

\begin{enumerate}
\item[1.]\label{freeaction-r2}
A model for $E(\zz)$ is ${\r}^2$. 
\item[2.] A model for $E(\z \times \z)$ is $\r^2$, since $\z \times \z $ acts freely on  $\r^2$ by translation.

\item[3.]
(a) Let $[H] \in [\fo] $, where $H$ is as in (\ref{h}),  by (\ref{conj}) note: if $$(t_1,t_2)(1,0)(t_1,t_2)^{-1}=((-1)^{t_2} , 0 ),$$
then $g^{-1}Hg=H$, and therefore
\begin{align*}
N_{\zz}[H] &= \{ g\in \zz \mid |g^{-1} H g \cap H|= \infty \} \nonumber \\
&\cong  \zz
\end{align*}
So by (\ref{gh}):
$
\gh  = \vcy (H). $
\\
(b) We claim that a model for  $E_{\vcy(H)}(\zz)$ is $\r$.  Define the action of $\zz$ on $\r$ as follows
$$(t_1,t_2) x = t_2 +x,  \;\;\;\; (t_1,t_2)\in \zz, \;\; x\in \r. $$
Observe that  any point  $x\in \r$ is fixed by $(t_1,t_2)$ if only if $t_2=0$.  If $S$ is a subgroup  not in $\vcy(H)$, then the $S$-fixed point set $R^S $ is the empty set. \\
Let $S\in \vcy(H)$, then  $S=\langle (n,0)\rangle$ for some $n \in \z -\{0\}$. Then the  $S$-fixed point set  $R^S = \r$ is contractible. Therefore, by Proposition \ref{luck-caract},
 we have the claim.

\item[4.]
(a) Let $R=\langle(n,2m) \rangle$, with $n,m $ fixed in $\z- \{0\}$,   and suppose $R$ is maximal in $\fo$.
By (\ref{conj}) we have that
\begin{align*}
  (t_1,t_2)(n,2m)(t_1,t_2)^{-1}= ((-1)^{t_2}n, 2m ).
\end{align*}
Then we have two possibilities,
\begin{align*}
 \text{ i) } &gRg^{-1}=R \; \;\text{if only if } \:   t_2  \text{ is even },  \\
 \text{ ii) } &gRg^{-1}=\langle (-n,2m) \rangle \;\;  \text{if only if  } \: t_2 \; \text{is odd},
  \\
 &\text{therefore}\;\;\; gRg^{-1}\cap R=\{1\}.
\end{align*}

We conclude by (\ref{nh}) that
\begin{align*}
  N_{\zz}[R]=\{(t_1,2t_2) \mid t_1,t_2 \in \z\} = \z \times \z,
  \end{align*}
and so by (\ref{gh}) and (\ref{power-e}): 
 \begin{align*}
  \mathcal{G}[R]&= \{ \langle(ln,2lm) \rangle  \mid \; l \in \z  \} \nonumber \\
  &= \vcy(R)  \label{gr}
\end{align*}
(b) A model for $E_{\vcy(R)}(\z\times \z)$ is $\r$ by Proposition \ref{luck-caract}.  Observe  that the normalizer $N_{\zz}(R)$
of $R$ is equal to $N_{\zz}[R]$, therefore
 $R$ is a normal subgroup of $N_{\zz}[R]=\z\times \z$ and   we have the following exact sequence
\begin{align*}
0\rightarrow R \xrightarrow{i}  \z \times \z \xrightarrow{\phi}  \z \rightarrow 0
\end{align*}
where $i$ is the inclusion and $\phi$ is the projection onto the quotient $(\z \times \z) / R   \cong \z .$
Let $(t_1,t_2) \in \z \times \z$ and $x \in \r$, define the action of $\z \times \z$ in $\r$ as follows:
\begin{align*}
  (t_1,t_2) \cdot x= \phi(t_1,t_2)+ x.
\end{align*}
So, $  \phi(t_1,t_2) +x =x \;\;\;\text{if only if} \;\;\; (t_1,t_2)\in \ker \phi=R,$
therefore  the isotropy groups are in $\vcy(R)$.
Furthermore, $\r^{R}=\r$ is contractible, and if $S\not\in \vcy(R)$ then $\r^S=\emptyset $.

\item[5.] (a) Let $[K]$ be as in  (\ref{k}) and  $(t_1,t_2)\in \zz$. By equation (\ref{conj}), it  follows that
$$(t_1,t_2)(0,2)(t_1,t_2)^{-1}=(0,2). $$
Since $[\langle(0,2)\rangle]=[K]$, we conclude by (\ref{nh}) that
\begin{equation*} \label{nk}
N_{\zz}[K]=\zz,
\end{equation*}
so by (\ref{nh}), (\ref{power-e}) and (\ref{power-o-e}), note: 
\begin{align*}
  \mathcal{G}[K]&= \{D \subseteq \zz \mid D\in \fo, \: |D\cap K|=\infty \} \cup \{1\}  \nonumber \\
  &= \{ \langle (n,2m+1)\rangle \mid n,m \in \z\} \cup\{1\}. \label{gk}
\end{align*}
\item[(b)]  A model for $E_{\g[K]}\zz$ is as follows: \\
 By (\ref{power-o-e}), note that  $ \langle (n,2m+1)\rangle \subseteq \langle (n,1)\rangle$ for all $n,m \in \z$.
 Let  $K_n=\langle (n,1)\rangle $ and
  consider a point $k_n$ for each $K_n$.  Observe  for $g=(t_1,t_2)\in \zz$,
  $gK_ng^{-1}=K_{m}$  that $m=(-1)^{t_2}n+2t_1$ by Remark \ref{remconj}. Then $\zz$ acts by permuting the subgroups
  $K_n$, $n\in \z$.
\\
  We  define an action of $\zz$ on $X:=\{k_n \mid n\in \z\}$ as follows
    $$g \cdot k_n =k_{m} \, \text{ iff } \, gK_ng^{-1}=K_{m} .$$
 \\
By the   above observation and (\ref{power-o-e}), note: 
  $$g.k_n=k_n \text{ iff }  g\in N_{\zz}(K_n)=K_n.$$
  Therefore   the $\zz$-set $X$ is a model for $I_{\g[K]-\{1\}}$.
  Since a model for $E(\zz)$ is $\r^2$, by Theorem \ref{join-g-f} we conclude that a model for
  $E_{\g[K]}(\zz)$ is the join $X * \r^2$.
\end{enumerate}

\subsection{A second model for $\underline{\underline{E}}(\zz)$ }

 We are now ready to apply Theorem \ref{thm-luck-weierm} and obtain a model for $\underline{\underline{E}}(\zz)$ by the following $(\zz)$-pushout.
Let $G=\zz$ and $A=\z\times \z$, then
\begin{align}
       \xymatrix{
                 G \times_{G} EG \;
                 \coprod \; G\times_{G} EG \;
                 \coprod_{l\in I}\; G\times_{A} EA
                 \ar[r]^(0.7){i}
                 \ar[d]|{id \times_{G} p
                 \coprod   id \times_{G} g  677
                 \coprod_{k \in I} id \times_{A}    f_l   }
                 & \r^2
                 \ar[d]
                 \\
                 G \times_G E_{\vcy(H)}G \;
                 \coprod \;  G\times_G E_{\g[K]}G \;
                 \coprod_{l\in I} \; G\times_A E_{\vcy(R)}A
                 \ar[r]
                 & \underline{\underline{E}}(\zz),
}
\end{align}
where $I$ is a complete system of representatives $[R]$ of the $G$-orbits under conjugation over the classes  of subgroups of type $[R]=[ \langle (n,2m)\rangle ]$, $(n,m) \in \zz- \{(0,0)\}.$

Applying  models given in   Section  \ref{models}  and the fact
$G \times_G Y = Y$,
we have 
\begin{align}\label{pushout}
\xymatrix{
                  \r^2 \;
                 \coprod \;  \r^2 \;
                 \coprod_{l\in I}\; G\times_{A} \r^2
                 \ar[d]|{    p
                 \coprod    g
                 \coprod_{k \in I} id \times_{A}    f_l  }
                  \ar[r]^(0.7){i}
                 & \r^2
                 \ar[d]
                 \\
                  \r \;
                 \coprod \;   (\{k_n\}_{n\in \z} \ast \r^2 )\;
                 \coprod_{l \in I} \; G\times_A \r
                 \ar[r]
                 & \underline{\underline{E}}(\zz)
}
\end{align}

The maps are  given by the universal property of classifying spaces, applied with    inclusions of families of subgroups, these are as follows:

\begin{enumerate}
\item[1.]  $p\colon \r^2 \to \r$ is the projection on the $y$-axis, $(t,s)\mapsto s $.  Since $G$ acts on the $y$-axis of $\r^2$ by translation   and the action of $G$ on $\r$ is also by translation,  then $p$ is a  $G$-map, which is cellular.  By Remark \ref{incfam} $p$ is unique up to $G$-homotopy.
\item[2.]  The map $g \colon \r^2 \to \{k_n\}_{n\in \z} \ast \r^2$ is the inclusion, because the $G$-action is the same on $\r^2$, we have that $g$ is a $G$-map.
\item[3.] Let $l \in I$,     $f_l \colon  \r^2 \to \r $ are the quotient map of $\r^2$ on the line through $(n,2m)$ and the origin. It follows the map is $G$-equivariant.
\item[4.] The map $i$ is the identity on the first two $G$-spaces corresponding to the disjoint union, and  it is the natural $G$-map  on the third $G$-space. 
\end{enumerate}

\begin{thm}\label{second-model}
Let $G=\zz$, $A=\z\times \z$ and $I$ be a complete system of representatives $[R]$ of the $G$-orbits under conjugation over the classes  of subgroups of type $[R]=[ \langle (n,2m)\rangle ]$, $(n,m) \in \zz- \{(0,0)\}.$ From the  $G$-pushout (\ref{pushout}) we have
\begin{align*}
\underline{\underline{E}} (\zz)= \frac{\r \coprod \{k_n\}_{n\in \z}\ast \r^2 \coprod_{l\in I} G\times_A \r^2 }{\forall x\in \r^2: p(x)\sim g(x)\sim [1_G,f_l(x)]},
\end{align*} 
where the maps $p$, $g$ and $f_l$ are as before. 
\end{thm}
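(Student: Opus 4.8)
The plan is to apply Theorem~\ref{thm-luck-weierm} with $\f=\fin=\{1\}$ and $\g=\vcy$, and then to read the resulting cellular $G$-pushout (\ref{pushout}) off as the quotient displayed in the statement. First I would recall from Section~\ref{chapter-build} that $[H]$, $[K]$ and the classes of type $[R]$ exhaust $[\fo]$, that $[H]$ and $[K]$ are fixed under conjugation while the $[R]$ are permuted, so that $\{[H],[K]\}\cup I$ is a complete system of representatives of the $G$-orbits in $[\fo]$; these index the three blocks in the upper-left corner of (\ref{pushout}). Next I would confirm that the entries of (\ref{pushout}) are the models of Section~\ref{models}: $N_{\zz}[H]=N_{\zz}[K]=\zz$ and $N_{\zz}[R]=A=\z\times\z$, with $E_{\vcy(H)}(\zz)=\r$, $E_{\g[K]}(\zz)=\{k_n\}_{n\in\z}\ast\r^2$ and $E_{\vcy(R)}(A)=\r$, while $\r^2$ is a model for $E(\zz)=E_\f(G)$ by Remark~\ref{r2-free} and, via the free translation action of $A$, for $E(A)$; the structure maps are the cellular equivariant maps $p$, $g$, $\mathrm{id}\times_A f_l$ and $i$ described after (\ref{pushout}). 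Theorem~\ref{thm-luck-weierm} then yields that the lower-right corner of (\ref{pushout}) is a model for $\evyc=E_\vcy(\zz)$.

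It remains to identify this pushout explicitly. The pushout $D\xleftarrow{f}B\xrightarrow{i}C$ is the colimit $(C\coprod D)/(i(b)\sim f(b))$, where $C=\r^2$ is the upper-right corner, $B$ and $D$ are the upper-left and lower-left corners of (\ref{pushout}), $f=p\coprod g\coprod_l(\mathrm{id}\times_A f_l)$ is the left-hand map, and $i$ is the identity on the two $\r^2$-blocks and the canonical $G$-map on the $[R]$-block. The key observation is that on the $[K]$-block $i$ is the identity $\r^2\to\r^2$ while $f$ is the inclusion $g\colon\r^2\hookrightarrow\{k_n\}_{n\in\z}\ast\r^2$; hence the one free copy $C=\r^2$ is glued homeomorphically onto the $\r^2$-factor of $\{k_n\}_{n\in\z}\ast\r^2$ and may be deleted. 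After this identification each point $x$ of that $\r^2$ still carries the gluing $x\sim p(x)$ coming from the $[H]$-block and the gluings $x\sim[1_G,f_l(x)]$ coming from the $[R]$-blocks, which assemble into the single relation $p(x)\sim g(x)\sim[1_G,f_l(x)]$ for $x\in\r^2$, giving precisely the quotient in the statement.

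The step needing the most care is this absorption and its bookkeeping: since the copy $C=\r^2$ is shared by all three blocks, once it is absorbed into $\{k_n\}_{n\in\z}\ast\r^2$ one must treat the $[H]$- and $[R]$-identifications as attached to the same variable $x$, producing the compound relation above rather than three independent gluings, and the $G$-equivariance of $p$, $g$ and $f_l$ checked after (\ref{pushout}) then spreads these pointwise relations over whole $G$-orbits. One should also make sure the hypotheses of Theorem~\ref{thm-luck-weierm} hold on the nose, that is, that one of the two maps out of $B$ is a $G$-cofibration; since $g$ is an inclusion and $p$, $f_l$ are cellular, this is arranged by the usual replacement of $i$ by a homotopic $G$-cofibration, which does not alter the $G$-homotopy type of the resulting model. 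Everything else is a direct substitution of the models and maps already computed in Section~\ref{models}.
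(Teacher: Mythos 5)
Your proposal is correct and takes essentially the same route as the paper, whose proof of Theorem~\ref{second-model} is exactly the application of Theorem~\ref{thm-luck-weierm} to the pushout (\ref{pushout}) with the representatives $\{[H],[K]\}\cup I$ and the models and equivariant maps of Section~\ref{models}, followed by reading the pushout off as the displayed quotient via the absorption of the top-right $\r^2$ along $g$; your cofibrancy caveat (mapping-cylinder replacement of $i$) is a genuine hypothesis of the L\"uck--Weiermann theorem that the paper leaves implicit. Note only that the third summand in the displayed quotient should be $G\times_A\r$ (the lower-left corner of (\ref{pushout})), as your relation $[1_G,f_l(x)]$ with $f_l(x)\in\r$ implicitly assumes; the $\r^2$ appearing there in the statement is a typo.
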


\section{Homology}
In this Section we compute  the homology groups of the model given in Theorem \ref{thm-join}, 
 $\underline{\underline{B}} (\zz)=(\coprod_{J} \s^1 )* \mathcal{K}$,  where $J$ is an
 countably infinite set. To simplify notation, denote  $X= \coprod_{J}\s^1$.

 Since the Klein bottle is path-connected, then the join $X*\mathcal{K}$ is simply connected, \cite[Sec. 7.2]{homotopical}. Therefore $H_0(X*\mathcal{K})= \z  $ and $ H_1(X*\mathcal{K})=0.$ 

From the well known short exact sequence of the join,  see \cite[Ch. 8]{munkres}, we have:  let $n>0$,
\begin{align*}
\xymatrix{0  \ar[r] &\widetilde{H}_{n+1} (X * \mathcal{K}  )  \ar[r]& \widetilde{H}_n(X\times \mathcal{K})
\ar[r]^(0.4){\pi}& \widetilde{H}_n (X) \oplus \widetilde{H}_n (\mathcal{K})  \ar[r]& 0
}
\end{align*}
where the homomorphism $\pi $ is given by $a \mapsto (\pi_X (a), -\pi_\mathcal{K}(a))$. (Here $\pi_X$
and $\pi_\mathcal{K}$ are the homomorphisms induced in homology by the projections of $X\times \mathcal{K} $
on $X$ and $\mathcal{K}$ respectively.)
Then we have the following  exact sequence:
\begin{align}\label{homology}
0 \rightarrow \widetilde{H}_{n+1} (X * \mathcal{K})  \rightarrow \bigoplus_J \widetilde{H}_n(\s^1\times \mathcal{K})
\rightarrow \left(\bigoplus_J \widetilde{H}_n (\s^1)\right) \oplus \widetilde{H}_n (\mathcal{K})
\rightarrow 0.
 \end{align}
Thus,  by  the CW structure of $\mathcal{K} $ and $\s^1$, we  use the K\"unneth theorem to obtain the  homology groups of the product $\s^1 \times \mathcal{K}$: 
\begin{align*}
 H_i(\s^1 \times \mathcal{K})=\left \{ \begin{array}{cc} \z, \;\;\;& i=0;
 \\ \z \oplus \z\oplus \z_2,\;\; &i=1;
 \\ \z\oplus \z_2, \; & i=2;
 \\ 0,   & i>2.
 \end{array}\right.
 \end{align*}
Then for  $n=1$:
\begin{align*}
0 \rightarrow \widetilde{H}_{n+1} (X * \mathcal{K}  ) \xrightarrow{} \bigoplus_J (\z \oplus \z\oplus \z_2 )
\xrightarrow{\pi}  (\bigoplus_J \z  ) \oplus \z\oplus \z_2 
\rightarrow 0,
\end{align*}
and  we  conclude that
 \begin{align*}
 H_2(X * \mathcal{K})= \bigoplus_{J'} (\z\oplus \z_2),
 \end{align*}
where $J'$ is a countably infinite set.   If  $n=2$ in  (\ref{homology}), then
$$H_3(X*\mathcal{K})= H_2(X\times \mathcal{K})=\bigoplus_J(\z\oplus \z_2).$$ And at last, for $i>3$,
$H_i(X*\mathcal{K})=0$.

\begin{prop}\label{prop-homology}
Homology groups of  $\underline{\underline{B}} (\zz)=\big(\coprod_{J} \s^1 \big)* \mathcal{K},$   where $J$ is an
 countably infinite set (Thm. \ref{thm-join}), are the following: 
 
 \begin{align*}
 H_i\big(\big(\coprod_{J} \s^1 \big)\ast \mathcal{K}\big)=\left \{ \begin{array}{ccc} \z , \;\;\;& i=0;\\
 0, \;\;\;& i=1; \\
 \bigoplus_{J'} (\z\oplus \z_2), \;\;\;& i=2;
  \\
 \bigoplus_{J} (\z\oplus \z_2), \;\;\;& i=3;
  \\ 0,   & i>3,
 \end{array}\right.  
 \end{align*}
where $J'$ is a countable infinite set. 
\end{prop}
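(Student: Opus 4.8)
The plan is to read off every $H_i\big((\coprod_J \s^1) * \mathcal{K}\big)$ from the short exact sequence of the join recorded in (\ref{homology}), so that the whole computation reduces to knowing $H_*(\s^1\times\mathcal{K})$ together with a careful analysis of the map $\pi$ in each degree. Writing $X=\coprod_J\s^1$, I would first note that since $\mathcal{K}$ is path-connected the join $X*\mathcal{K}$ is simply connected; this gives $H_0=\z$ and $H_1=0$ directly, and also lets me identify reduced with unreduced homology in every positive degree. The remaining groups $H_i$ for $i\geq 2$ come from evaluating the sequence at $n=i-1$, whose middle term is $\widetilde{H}_n(X\times\mathcal{K})=\bigoplus_J H_n(\s^1\times\mathcal{K})$ (using $X\times\mathcal{K}=\coprod_J(\s^1\times\mathcal{K})$).

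For the input I would invoke the K\"unneth theorem. Because $\widetilde{H}_*(\s^1)$ is free the Tor terms vanish, and using $H_*(\mathcal{K})=(\z,\ \z\oplus\z_2,\ 0,\dots)$ I obtain $H_0(\s^1\times\mathcal{K})=\z$, $H_1=\z\oplus\z\oplus\z_2$, $H_2=\z\oplus\z_2$, and $H_i=0$ for $i>2$, matching the table in the excerpt. Passing to the $J$-fold disjoint union supplies the middle terms of the relevant exact sequences.

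The heart of the argument, and the step I expect to be the main obstacle, is the analysis of $\pi$ at $n=1$. Here the source is $\bigoplus_J(\z\oplus\z\oplus\z_2)$, while the target $\widetilde{H}_1(X)\oplus\widetilde{H}_1(\mathcal{K})$ contains $J$ copies of $\z$ from the circles but only a \emph{single} copy of $\widetilde{H}_1(\mathcal{K})=\z\oplus\z_2$. Tracking the two projections on each factor $\s^1_j\times\mathcal{K}$, the summand $H_1(\s^1_j)\otimes H_0(\mathcal{K})$ maps isomorphically to the $j$-th $\z$ under $\pi_X$ and dies under $\pi_\mathcal{K}$, while the summand $H_0(\s^1_j)\otimes H_1(\mathcal{K})$ does the opposite. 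Hence $\pi$ is surjective, and (up to the sign in $a\mapsto(\pi_X(a),-\pi_\mathcal{K}(a))$) its kernel consists of the finitely supported families $\{(y_j,z_j)\}$ in $(\bigoplus_J\z)\oplus(\bigoplus_J\z_2)$ subject to the two independent conditions $\sum_j y_j=0$ and $\sum_j z_j=0$. I would then identify this kernel as $\ker\big(\bigoplus_J\z\to\z\big)\oplus\ker\big(\bigoplus_J\z_2\to\z_2\big)$; each is a countably infinite direct sum, so $\widetilde{H}_2(X*\mathcal{K})\cong\bigoplus_{J'}(\z\oplus\z_2)$ for a countably infinite $J'$. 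The delicate point is exactly this index-set bookkeeping: collapsing $J$ copies of $\widetilde{H}_1(\mathcal{K})$ onto one produces a codimension-one subgroup that is nonetheless still a countably infinite sum of $\z\oplus\z_2$.

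The remaining degrees are comparatively routine. For $n=2$ the target $\widetilde{H}_2(X)\oplus\widetilde{H}_2(\mathcal{K})$ vanishes, so the sequence forces $H_3(X*\mathcal{K})\cong\widetilde{H}_2(X\times\mathcal{K})=\bigoplus_J(\z\oplus\z_2)$. For $n>2$ every group $H_n(\s^1\times\mathcal{K})$ is zero, whence $H_i(X*\mathcal{K})=0$ for $i>3$. Assembling these values yields the stated homology.
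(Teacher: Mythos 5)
Your proposal is correct and follows essentially the same route as the paper: the short exact sequence (\ref{homology}) for the join, the K\"unneth computation of $H_*(\s^1\times\mathcal{K})$, and a degree-by-degree reading of the sequence, including the identification $H_3 \cong \widetilde{H}_2(X\times\mathcal{K})$ when the target vanishes. Your explicit analysis of $\ker\pi$ at $n=1$, identifying it with $\ker\big(\bigoplus_J\z\to\z\big)\oplus\ker\big(\bigoplus_J\z_2\to\z_2\big)$ and hence with $\bigoplus_{J'}(\z\oplus\z_2)$, simply makes precise a bookkeeping step that the paper leaves implicit, and it is carried out correctly.
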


\end{document}